\newtheorem{teo}{Theorem}
\theoremstyle{definition}
\newtheorem{con*}{Conjecture}
\begin{document}
\setlength{\parskip}{1ex plus 0.5ex minus 0.2ex}
\begin{center}

\textbf{THE BANACH-ALAOGLU THEOREM IS EQUIVALENT TO THE TYCHONOFF THEOREM FOR COMPACT HAUSDORFF SPACES}\\
\end{center}
\begin{center}
\textsl{By STEFANO ROSSI}
\end{center}
$$$$
$$$$
\textbf{Abstract}\quad In this brief note we provide a simple approach to give a new proof of the well known fact that the Banach-Alaoglu theorem and the Tychonoff product theorem for compact \emph{Hausdorff}  spaces are equivalent.

$$$$
The Banach-Alaoglu theorem \cite{Ala} is one of the most useful compactness result available in Functional Analysis. It can be stated as follows:
\begin{teo}[Banach-Alaoglu]
Let $\mathfrak{X}$ be a normed space, then the unit ball $\mathfrak{X}^*_1$ of its dual space is weakly*-compact.
\end{teo}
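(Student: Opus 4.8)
The plan is to realise the unit ball $\mathfrak{X}^*_1$, equipped with the weak*-topology, as a \emph{closed} subspace of a product of compact Hausdorff spaces, and then to invoke the Tychonoff theorem for compact Hausdorff spaces together with the elementary fact that a closed subspace of a compact space is compact. Thus the whole weight of the argument is shifted onto the Hausdorff version of Tychonoff, which is exactly the point of interest in this note.

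Concretely, I would first fix the scalar field $\mathbb{K}$ ($=\mathbb{R}$ or $\mathbb{C}$) and, for each $x\in\mathfrak{X}$, put $D_x:=\{\lambda\in\mathbb{K}:|\lambda|\le\|x\|\}$, which is a compact Hausdorff space. By the Tychonoff theorem for compact Hausdorff spaces, the product $P:=\prod_{x\in\mathfrak{X}}D_x$, with the product topology, is compact and Hausdorff. Next I would introduce the evaluation map $\Phi:\mathfrak{X}^*_1\to P$, $\Phi(f):=(f(x))_{x\in\mathfrak{X}}$, which is well defined because $\|f\|\le 1$ forces $|f(x)|\le\|x\|$ for every $x$. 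Since both the weak*-topology on $\mathfrak{X}^*_1$ and the product topology on $P$ are topologies of pointwise convergence, $\Phi$ is a homeomorphism of $\mathfrak{X}^*_1$ onto its image $\Phi(\mathfrak{X}^*_1)\subseteq P$.

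It then remains to check that $\Phi(\mathfrak{X}^*_1)$ is closed in $P$. A point $(a_x)_{x\in\mathfrak{X}}\in P$ lies in the image precisely when the assignment $x\mapsto a_x$ is linear, i.e.\ $a_{x+y}=a_x+a_y$ for all $x,y\in\mathfrak{X}$ and $a_{\lambda x}=\lambda a_x$ for all $\lambda\in\mathbb{K}$, $x\in\mathfrak{X}$; note that the norm bound is already built into the definition of $P$. For each fixed choice of $x,y,\lambda$, the maps $(a_z)_z\mapsto a_{x+y}-a_x-a_y$ and $(a_z)_z\mapsto a_{\lambda x}-\lambda a_x$ are continuous on $P$, being finite combinations of coordinate projections; hence each of the sets they cut out by the equation ``$=0$'' is closed, and $\Phi(\mathfrak{X}^*_1)$ is the intersection of all of them, so it is closed in $P$. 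Being a closed subspace of the compact space $P$, it is compact, and transporting this through the homeomorphism $\Phi$ shows that $\mathfrak{X}^*_1$ is weak*-compact.

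The argument is almost entirely routine; what demands care — and is the real content of the equivalence discussed in this note — is that at no stage is the Tychonoff theorem for \emph{arbitrary} topological spaces used: the only product formed is a product of compact \emph{Hausdorff} spaces, and the only other ingredient is the trivial fact that closed subsets of compact spaces are compact. I therefore expect the main (modest) obstacle to be bookkeeping rather than conceptual: checking precisely that the product topology on $P$ restricts to exactly the weak*-topology on $\Phi(\mathfrak{X}^*_1)$, and that the linearity relations genuinely define closed subsets, so that the reduction to the Hausdorff Tychonoff theorem is airtight.
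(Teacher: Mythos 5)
Your proposal is correct and coincides with the argument the paper itself alludes to (it does not spell out a proof, but explicitly says one embeds $\mathfrak{X}^*_1$ into a product of closed disks and invokes Tychonoff for compact Hausdorff spaces): your map $\Phi$ into $\prod_{x\in\mathfrak{X}}D_x$, the identification of the weak* topology with the restricted product topology, and the closedness of the image via the linearity relations are exactly the standard route, and in particular you correctly use only the Hausdorff version of Tychonoff. Nothing further is needed.
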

As it is well known, the proof of the previous theorem crucially depends on Tychonoff's product theorem, since one can embeds $\mathfrak{X}^*_1$ in a suitable product of compact Hausdorff spaces, namely the product of closed disks in the complex plane. Nevertheless, if $\mathfrak{X}$ is a separable Banach space, the unit ball of its dual is metrizable in the weak* topology, so one can prove the theorem by showing that $\mathfrak{X}^*_1$ is \emph{sequentially} compact. This does not require Tycnohoff theorem, rather a diagonal argument does the job, as it was pointed out by Banach in his classic reference \cite{Ban}. Anyway, such a proof requires a countable version of the Axiom of Choice.\\
On the other hand, Tychonoff theorem is the most general compactness result available in point set topology. It can be stated in the following manner:
\begin{teo}[Tychonoff]
Let $\{X_{\alpha}:\alpha\in I\}$ be any family of compact spaces. Then the product $\prod_{\alpha\in I} X_{\alpha}$ is a compact space with respect the product topology.
\end{teo}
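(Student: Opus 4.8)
The plan is to derive Tychonoff's theorem from Alexander's subbasis lemma. First I would recall that statement: if $\mathcal{S}$ is a subbasis for the topology of a space $X$ and every cover of $X$ by members of $\mathcal{S}$ has a finite subcover, then $X$ is compact. Its proof is the familiar maximality argument. Suppose $X$ is not compact and use Zorn's lemma to pick an open cover $\mathcal{U}$ that has no finite subcover and is maximal with this property. Then $\mathcal{U}\cap\mathcal{S}$ cannot cover $X$, for otherwise the hypothesis on $\mathcal{S}$ would produce a finite subcover of $\mathcal{U}$. Choose $x\in X$ not covered by $\mathcal{U}\cap\mathcal{S}$ and pick $U\in\mathcal{U}$ with $x\in U$; since $U$ is open there are $S_1,\dots,S_n\in\mathcal{S}$ with $x\in S_1\cap\cdots\cap S_n\subseteq U$. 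Each $S_i$ lies outside $\mathcal{U}$ (otherwise $x$ would be covered by $\mathcal{U}\cap\mathcal{S}$), so maximality yields finite $\mathcal{F}_i\subseteq\mathcal{U}$ with $\mathcal{F}_i\cup\{S_i\}$ a cover; then $\{U\}\cup\mathcal{F}_1\cup\cdots\cup\mathcal{F}_n$ is a finite subcover of $\mathcal{U}$, a contradiction.

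Next I would apply the lemma to $X=\prod_{\alpha\in I}X_\alpha$ with the canonical subbasis $\mathcal{S}=\{\pi_\alpha^{-1}(V):\alpha\in I,\ V\subseteq X_\alpha\ \text{open}\}$, where $\pi_\alpha$ denotes the projection onto the $\alpha$-th factor. Let $\mathcal{C}\subseteq\mathcal{S}$ be a cover of $X$ and assume, for contradiction, that it admits no finite subcover. For each $\alpha$ put $\mathcal{V}_\alpha=\{V\subseteq X_\alpha\ \text{open}:\pi_\alpha^{-1}(V)\in\mathcal{C}\}$. If some $\mathcal{V}_\alpha$ covered $X_\alpha$, then by compactness of $X_\alpha$ finitely many of its members would cover $X_\alpha$, and their preimages under $\pi_\alpha$ would be a finite subcover of $X$, contrary to assumption. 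Hence $X_\alpha\setminus\bigcup\mathcal{V}_\alpha\neq\emptyset$ for every $\alpha$, and by the Axiom of Choice we may select a point $x_\alpha$ in this set for all $\alpha\in I$ simultaneously. The point $x=(x_\alpha)_{\alpha\in I}$ then lies in no member of $\mathcal{C}$: from $x\in\pi_\alpha^{-1}(V)$ one would get $V\in\mathcal{V}_\alpha$ and $x_\alpha\in V$, which is impossible. This contradicts that $\mathcal{C}$ covers $X$, so by Alexander's lemma $X$ is compact.

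The only real obstacle is the reliance on choice. It is invoked once inside Alexander's lemma, through Zorn's lemma, and once more to build the point $x$, and neither occurrence can be avoided: in this generality Tychonoff's theorem is well known to be equivalent to the Axiom of Choice. A parallel proof, phrased with ultrafilters rather than covers, runs as follows. A space is compact exactly when every ultrafilter on it converges; given an ultrafilter $\mathcal{F}$ on $\prod_{\alpha\in I}X_\alpha$, its pushforward $\pi_\alpha(\mathcal{F})$ is an ultrafilter on $X_\alpha$ and hence converges to some $y_\alpha$ by compactness; choosing such $y_\alpha$ for all $\alpha$ and using that convergence in the product topology is coordinatewise, one sees that $\mathcal{F}$ converges to $(y_\alpha)_{\alpha\in I}$. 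This variant carries exactly the same set-theoretic load, now split between the ultrafilter lemma and the choice of the limits $y_\alpha$.
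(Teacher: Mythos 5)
Your argument is correct: it is the classical deduction of the full Tychonoff theorem from Alexander's subbasis lemma (maximal bad cover via Zorn, subbasic sets refining a basic neighbourhood, maximality producing a finite subcover), followed by the standard application in which one chooses, for every index $\alpha$, a point $x_\alpha$ missed by the subbasic cover's traces on $X_\alpha$; the ultrafilter variant you sketch is likewise sound. Note, however, that the paper does not prove this statement at all: the general Tychonoff theorem is quoted purely as background, with the remark (and the reference to Kelley) that its proof needs the Axiom of Choice and is in fact equivalent to it. The paper's own machinery — spanning a normed space by the evaluation maps on a cube, applying Banach--Alaoglu, and embedding compact Hausdorff spaces into Tychonoff cubes via Urysohn functions — is deliberately confined to the Hausdorff case (Theorem \ref{wtychonoff}), which has strictly weaker set-theoretic content (BPI/UF rather than full AC), so your proof is genuinely different in scope: you prove more, but at the cost of invoking full choice, which is exactly what the paper is trying to localize and avoid. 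One small caveat on your closing remark: saying that \emph{neither} occurrence of choice ``can be avoided'' is imprecise. What is unavoidable, by Kelley's theorem, is some use of AC in total; the particular split between Zorn's lemma inside Alexander's lemma (whose strength is only that of BPI) and the simultaneous selection of the points $x_\alpha$ is an artifact of this proof, and other proofs distribute the choice differently. This does not affect the validity of your argument.
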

For the reader's convenience, we recall that the product topology on $\prod_{\alpha\in I} X_{\alpha}$  is the coarser topology for which the projection maps $p_{\lambda}:\prod_{\alpha\in I} X_{\alpha}\rightarrow X_{\lambda}$ ($p_{\lambda}(x)=x(\lambda)$ for each $x\in\prod_{\alpha\in I} X_{\alpha}$ ) are continuous.\\The proof of the previous theorem requires the Axiom of Choice in one of its equivalent formulations (for instance Zorn's lemma). Actually it is equivalent to the Axiom of Choice, as it has been shown in \cite{Kell} by Kelley in 1950.

What we are going to prove is that the Banach-Alaoglu theorem directly implies the following weaker version of the Tychonoff product theorem:
\begin{teo} [Tychonoff theorem: Hausdorff case]
\label{wtychonoff}
Let $\{K_{\alpha}\}$ be any family of compact \emph{Hausdorff} spaces. Then the product $\prod_{\alpha}K_{\alpha}$ is a compact Hausdorff space with respect to the product topology.
\end{teo}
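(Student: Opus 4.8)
The plan is to realize $\prod_\alpha K_\alpha$, up to homeomorphism, as a closed subset of the unit ball of the dual of a single normed space equipped with its weak* topology; compactness will then follow from the Banach--Alaoglu theorem together with the elementary fact that a closed subset of a compact space is compact. The Hausdorff part of the conclusion is free: an arbitrary product of Hausdorff spaces is Hausdorff, since two distinct points of the product differ in some coordinate and one separates them by pulling back a pair of disjoint neighbourhoods along the corresponding projection, and no form of choice is needed for this.

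First I would attach to each $\alpha$ the Banach space $C(K_\alpha)$ of continuous scalar functions with the sup norm, together with the evaluation map $\delta_\alpha\colon K_\alpha\to C(K_\alpha)^*_1$ given by $\delta_\alpha(k)(f)=f(k)$. Each $\delta_\alpha(k)$ has norm $1$ (test against $f\equiv 1$), the map is injective because continuous functions separate points of a compact Hausdorff (hence normal) space by Urysohn's lemma, and it is weak*-continuous because $k\mapsto f(k)$ is continuous for every fixed $f$. As $K_\alpha$ is compact and $(C(K_\alpha)^*_1,w^*)$ is Hausdorff, $\delta_\alpha$ is a homeomorphism onto its image $\widetilde K_\alpha$, which is therefore compact and in particular closed in $(C(K_\alpha)^*_1,w^*)$.

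Next I would take $\mathfrak X$ to be the $\ell^1$-direct sum $\big(\bigoplus_\alpha C(K_\alpha)\big)_{\ell^1}$ of these spaces, i.e.\ the families $(f_\alpha)_\alpha$ with $\sum_\alpha\|f_\alpha\|_\infty<\infty$. A routine computation identifies $\mathfrak X^*$ isometrically with the $\ell^\infty$-direct sum $\big(\bigoplus_\alpha C(K_\alpha)^*\big)_{\ell^\infty}$ under the pairing $\langle(\varphi_\alpha),(f_\alpha)\rangle=\sum_\alpha\varphi_\alpha(f_\alpha)$, so that, as a set, $\mathfrak X^*_1=\prod_\alpha C(K_\alpha)^*_1$. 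The step I expect to be the main obstacle is verifying that on this ball the weak* topology coincides with the product of the weak* topologies of the factors. One inclusion is clear, since each coordinate evaluation $(\varphi_\alpha)\mapsto\varphi_\beta(g)$ is weak*-continuous; for the other, given finitely many elements $f^{(1)},\dots,f^{(n)}$ of $\mathfrak X$ and $\varepsilon>0$, the summability of the norms lets one choose a finite index set $F$ outside of which all the tails $\sum_{\alpha\notin F}\|f^{(i)}_\alpha\|$ are small, whence on the ball the functionals $(\varphi_\alpha)\mapsto\sum_\alpha\varphi_\alpha(f^{(i)}_\alpha)$ are $\varepsilon$-controlled by the finitely many coordinates in $F$, which is precisely the statement that the basic weak* neighbourhood is open in the product topology.

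With this in hand, Banach--Alaoglu gives that $(\mathfrak X^*_1,w^*)=\prod_\alpha\big(C(K_\alpha)^*_1,w^*\big)$ is compact. Since each $\widetilde K_\alpha$ is closed in its factor, $\prod_\alpha\widetilde K_\alpha=\bigcap_\alpha p_\alpha^{-1}(\widetilde K_\alpha)$ is closed in this compact space, hence compact; and $\prod_\alpha\delta_\alpha$ is a homeomorphism of $\prod_\alpha K_\alpha$ onto $\prod_\alpha\widetilde K_\alpha$ for the product topologies. Therefore $\prod_\alpha K_\alpha$ is compact, and it is Hausdorff by the opening remark.
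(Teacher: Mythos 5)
Your proof is correct, but it takes a genuinely different route from the paper's. The paper proceeds in two stages: it first applies Alaoglu to the normed space spanned by the coordinate evaluations inside $C([0,1]^{\Lambda})$ to show that every Tychonoff cube $[0,1]^{\Lambda}$ is compact (restricting to the span of the projections is exactly what makes the sup norm finite without already knowing the cube is compact), and then invokes the Urysohn-type embedding of each compact Hausdorff $K_\alpha$ into some cube $[0,1]^{\Lambda_\alpha}$, so that $\prod_\alpha K_\alpha$ sits as a closed subspace of a single cube $[0,1]^{\bigsqcup_\alpha \Lambda_\alpha}$. You instead apply Alaoglu once, to the $\ell^1$-direct sum $\mathfrak X=\bigl(\bigoplus_\alpha C(K_\alpha)\bigr)_{\ell^1}$, using the canonical evaluations $\delta_\alpha$ together with the identification of $(\mathfrak X^*_1,w^*)$ with $\prod_\alpha\bigl(C(K_\alpha)^*_1,w^*\bigr)$ — which you rightly single out as the main point, and your tail-cutting argument for it is the correct one; since each $K_\alpha$ is compact by hypothesis, the sup norms on $C(K_\alpha)$ are finite and the images $\widetilde K_\alpha$ are closed, so the closed-subset argument finishes the job. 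What your route buys: no separate topological embedding theorem, a single application of Alaoglu, and the directly useful fact that a product of dual balls is itself a dual ball with the product of weak* topologies. What the paper's route buys: much lighter functional-analytic input (only finite linear combinations of coordinate functions, no $\ell^1$-sums or duality computation), plus the isolation of the compactness of cubes, which is the standard kernel of BPI-style arguments. Both proofs lean on Urysohn's lemma in comparable ways (you to separate points so that $\delta_\alpha$ is injective, the paper to separate points from closed sets), and all of your constructions are canonical, so your argument sits on the same choice-free (ZF) footing that the paper is careful to maintain.
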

Before giving our proof, one probably should mention that the equivalence we have to prove is already known in literature through a quite indirect argument. To be more precise, let us consider the following statements:
\begin{teo} [Prime ideal theorem for Boolean algebras: BPI]
Every Boolean algebra contains a prime ideal.
\end{teo}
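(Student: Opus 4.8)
The plan is to establish BPI by the classical maximality argument, invoking Zorn's lemma; this is legitimate, since BPI is itself a consequence of the full Axiom of Choice. Throughout I take the Boolean algebra $B$ to be nontrivial, i.e.\ $0 \neq 1$ (for the one-element algebra the assertion is vacuous, and should be read with this exclusion understood). I would begin by forming the poset $\mathcal{P}$ of all \emph{proper} ideals of $B$ — equivalently, the ideals $I$ with $1 \notin I$ — ordered by inclusion; it is nonempty because $\{0\}$ belongs to it.

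The first step is to check that Zorn's lemma applies. Given a chain $\{I_j\}_{j}$ in $\mathcal{P}$, its union $I = \bigcup_j I_j$ is downward closed because each $I_j$ is, and is closed under $\vee$ because any two of its elements already lie in a common member of the chain; moreover $1 \notin I$ since $1 \notin I_j$ for all $j$. Hence $I \in \mathcal{P}$ is an upper bound for the chain, and Zorn's lemma yields a maximal proper ideal $M$.

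The heart of the proof is that such an $M$ is automatically prime. Here the Boolean structure is essential: I claim that for each $a \in B$ either $a \in M$ or $\neg a \in M$. If not, then $M$ together with $a$ generates the ideal $M_a = \{x : x \leq m \vee a \text{ for some } m \in M\}$, which strictly contains $M$; by maximality $M_a = B$, so $m \vee a = 1$ for some $m \in M$, and then $\neg a = \neg a \wedge (m \vee a) = \neg a \wedge m \leq m$, forcing $\neg a \in M$, a contradiction. Granting the claim, if $a \wedge b \in M$ and $a \notin M$ then $\neg a \in M$, hence $b = (b \wedge a) \vee (b \wedge \neg a) \in M$, both joinands lying in $M$; so $M$ is prime and the theorem is proved.

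I expect the only subtle point to be the verification that $M_a$ is a \emph{proper} ideal, which comes down precisely to the short computation $\neg a \wedge (m \vee a) = \neg a \wedge m$ together with downward closure of $M$; the remaining steps — the ideal axioms for a union of a chain, and the Zorn set-up — are entirely routine. (One could alternatively dualize to filters and cite the ultrafilter lemma, or argue via Stone representation, but the direct Zorn argument is the most self-contained.)
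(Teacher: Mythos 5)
Your argument is the standard, correct derivation of BPI from Zorn's lemma: the union of a chain of proper ideals is a proper ideal, a maximal proper ideal $M$ exists, the computation $\neg a=\neg a\wedge(m\vee a)=\neg a\wedge m$ shows that $a\notin M$ forces $\neg a\in M$, and then $a\wedge b\in M$ with $a\notin M$ gives $b=(b\wedge a)\vee(b\wedge\neg a)\in M$; the nontriviality caveat $0\neq 1$ is the right one. (One small expository wobble: in the main argument you use maximality to conclude $M_a=B$, while in your closing remark you describe the key step as showing $M_a$ is \emph{proper}; these are the two usual phrasings of the same computation, but you should pick one.)

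The more important point is that there is no proof in the paper to compare yours with: BPI is quoted as a known principle, cited (via Luxemburg's paper) as equivalent to the Ultrafilter Lemma and to the Hausdorff case of Tychonoff's theorem, and it is never derived. The author explicitly works in ZF \emph{without} the Axiom of Choice, precisely because BPI is not a theorem of ZF and is strictly weaker than full AC; so a Zorn's-lemma proof, while perfectly valid as a ZFC fact (and implicitly conceded in the paper's remark that AC ``easily implies all the theorems quoted above''), is orthogonal to the role the statement plays here, namely as one node in a web of equivalences established over ZF. In short, your proof is correct under full choice, but it answers a different question from the one the paper is concerned with, and no choice-free proof of the statement could exist.
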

\begin{teo} [Ultrafilters Lemma: UF] Let $\mathcal{F}$ be a filter on a set $X$. Then there exists an ultrafilter $\mathcal{G}$ such that $\mathcal{F}\preceq \mathcal{G}$.
\end{teo}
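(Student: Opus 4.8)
The plan is to deduce the Ultrafilters Lemma from Theorem~\ref{wtychonoff}, by realizing the set of all ultrafilters on $X$ containing $\mathcal{F}$ (that is, with $\mathcal{F}\preceq\mathcal{G}$) as a nonempty closed subset of a compact space. Identify each family $\mathcal{G}\subseteq\mathcal{P}(X)$ with its indicator function, i.e.\ with a point of the product space $\{0,1\}^{\mathcal{P}(X)}$, where $\{0,1\}$ carries the discrete topology. Since $\{0,1\}$ is compact Hausdorff, Theorem~\ref{wtychonoff} makes $\{0,1\}^{\mathcal{P}(X)}$ a compact Hausdorff space; in particular each coordinate map $h\mapsto h(A)$ (for $A\subseteq X$) is continuous, so any condition on $h$ depending on only finitely many coordinates cuts out a clopen, hence closed, subset. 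Now ``$h$ is an ultrafilter on $X$ containing $\mathcal{F}$'' is precisely the conjunction of the conditions $h(X)=1$; $h(\varnothing)=0$; $h(A)+h(A^{c})=1$ for every $A\subseteq X$; $h(A)=h(B)=1$ implies $h(A\cap B)=1$ for all $A,B\subseteq X$; and $h(F)=1$ for every $F\in\mathcal{F}$. Hence the set $\mathcal{K}$ of such $h$ is an intersection of a family $\mathcal{C}$ of closed subsets of $\{0,1\}^{\mathcal{P}(X)}$, and by compactness it suffices to show that $\mathcal{C}$ has the finite intersection property.

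To verify this, fix finitely many members of $\mathcal{C}$. Together they constrain only finitely many subsets of $X$; let $\mathcal{B}$ be the (necessarily finite) Boolean subalgebra of $\mathcal{P}(X)$ generated by those sets, together with the finitely many $F_{1},\dots,F_{k}\in\mathcal{F}$ that occur among the chosen conditions. A finite Boolean algebra is atomic, and $F_{1}\cap\dots\cap F_{k}\in\mathcal{F}$ is a nonempty element of $\mathcal{B}$, so it contains some atom $E$ of $\mathcal{B}$ (if $k=0$, take any atom of $\mathcal{B}$). Define $h^{\ast}\colon\mathcal{P}(X)\to\{0,1\}$ by $h^{\ast}(A)=1$ if and only if $E\subseteq A$. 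On $\mathcal{B}$ this is the principal ultrafilter concentrated at the atom $E$; hence $h^{\ast}$ satisfies every ultrafilter-type condition above that mentions only coordinates lying in $\mathcal{B}$, and $h^{\ast}(F_{i})=1$ for each $i$ because $E\subseteq F_{i}$. Thus $h^{\ast}$ belongs to each of the chosen closed sets, and the finite intersection property follows. By compactness $\mathcal{K}\neq\varnothing$, and any $h\in\mathcal{K}$ gives an ultrafilter $\mathcal{G}=\{A\subseteq X:h(A)=1\}$ with $\mathcal{F}\preceq\mathcal{G}$.

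I expect the finite intersection step to be the only genuine difficulty: one has to make sure that a finite subfamily of $\mathcal{C}$ really involves only finitely many coordinates, so that passing to a finite Boolean subalgebra is legitimate, and that the truth assignment attached to one of its atoms indeed meets all the selected conditions; the rest is bookkeeping and the standard closed-sets-with-FIP characterization of compactness. I would keep this topological derivation as the main argument, since its sole ingredient is exactly the case of Theorem~\ref{wtychonoff} that this note is about; for completeness one could also record the shorter route through BPI, namely that the dual ideal $\{A\subseteq X:X\setminus A\in\mathcal{F}\}$ is a proper ideal of the Boolean algebra $\mathcal{P}(X)$, so the corresponding quotient algebra is nontrivial and, by BPI, has a prime ideal, whose preimage is a prime (equivalently maximal) ideal of $\mathcal{P}(X)$ containing the dual ideal --- and the filter dual to that ideal is the required ultrafilter.
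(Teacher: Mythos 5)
Your argument is correct, but note that the paper itself never proves the Ultrafilter Lemma: UF is quoted only as background, with the equivalences UF $\Leftrightarrow$ BPI $\Leftrightarrow$ Theorem \ref{wtychonoff} declared well known and the link to Banach--Alaoglu attributed to \cite{Lux}, so there is no proof in the paper to compare yours against. What you give is the standard direct derivation of UF from the Hausdorff case of Tychonoff: encode candidate ultrafilters as indicator functions in $\{0,1\}^{\mathcal{P}(X)}$, observe that each defining condition depends on finitely many coordinates and hence cuts out a closed (indeed clopen) set, and verify the finite intersection property by passing to the finite Boolean subalgebra generated by the finitely many constrained sets together with $F_1,\dots,F_k\in\mathcal{F}$ and taking the assignment $h^{\ast}(A)=1$ iff $E\subseteq A$ for an atom $E$ below $F_1\cap\dots\cap F_k$. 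The steps check out: your list of conditions does characterize ultrafilters refining $\mathcal{F}$ (upward closure follows from intersection-closure together with $h(\varnothing)=0$ and $h(A)+h(A^{c})=1$), the atom exists because $\mathcal{F}$ is a proper filter so $F_1\cap\dots\cap F_k\neq\varnothing$, and no choice is used beyond the compactness input, which is exactly Theorem \ref{wtychonoff} for the two-point space. Within the paper's scheme your argument supplies the arrow Theorem \ref{wtychonoff} $\Rightarrow$ UF, which composed with Theorem \ref{eq} yields Banach--Alaoglu $\Rightarrow$ UF in the same direct, choice-free spirit the author advocates; the converse chain (UF $\Rightarrow$ Theorem \ref{wtychonoff} $\Rightarrow$ Banach--Alaoglu) is what the paper leaves to the cited literature, as does your closing remark about the BPI route.
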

It is a very well known fact that BPI and UF are equivalent and they both are equivalent to the weaker version of Tychonoff theorem \ref{wtychonoff}. In \cite{Lux}, among other interesting things dealing with the Hanh-Banach extension theorem and its equivalents, it is shown that the Banach-Alaoglu theorem is equivalent to BPI. Clearly this shows that the Banach-Alaoglu theorem and the Tychonoff theorem \ref{wtychonoff} are equivalent.\\
Nevertheless, to what I know, direct proofs of this equivalence are not known. This paper is devoted to supply this lack, providing a very simple approach in a pure functional analytic context.

In what follows, we will assume the Zermelo-Fraenkel axioms (ZF) for set theory deprived of the Axiom of Choice (AC), which easily implies all the theorems quoted above.\\ As a consequence of ZF axioms\footnote{In particular, thanks to the infinite axiom.}, we can assume the existence of the field $\mathbb{R}$ of real numbers, together with its completeness property (existence of the least upper bound for superiorly bounded subset of $\mathbb{R}$). We observe that the construction of $\mathbb{R}$ (starting from the set of natural numbers $\mathbb{N}$) is independent of AC.\\
We will think $[0,1]=\{x\in\mathbb{R}:0\leq x\leq 1\}\subset\mathbb{R}$ endowed with its relative topology, the topology  on $\mathbb{R}$ being metrizable  with distance function given by $d(x,y)=|x-y|$ for each $x,y\in\mathbb{R}$). As a metrizable space, $[0,1]$ is Hausdorff. We will not need the fact that $[0,1]$ is compact (Heine-Borel theorem).\\
The first step is to prove that the Banach-Alaoglu theorem implies that Tychonoff cubes $[0,1]^{\Lambda}\doteq\{f:\Lambda\rightarrow [0,1]\}$ are compact with respect to the pointwise convergence topology (that does coincide with the product topology), $\Lambda$ being any set of indexes. Observe that $[0,1]^{\Lambda}$ is not empty (without assuming the AC), since surely one has $[0,1]\subset [0,1]^{\Lambda}$, by taking the constant functions.

Here is our first result:
 \begin{teo}
Let $\Lambda$ any set of indexes, then the Alaoglu theorem implies that the Tychonoff cube $[0,1]^{\Lambda}$ is compact.
\end{teo}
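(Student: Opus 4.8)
The plan is to realize the cube $[-1,1]^\Lambda$ --- which is affinely homeomorphic to $[0,1]^\Lambda$ through the coordinatewise map $t\mapsto 2t-1$ --- as the weak*-closed unit ball of a concretely given dual space, so that the Banach-Alaoglu theorem can simply be quoted. Since Banach-Alaoglu has been stated above for an \emph{arbitrary} normed space, completeness will play no role, and we are free to work with a convenient small space.

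First I would take $\mathfrak{X}=c_{00}(\Lambda)$, the real vector space of finitely supported families $x:\Lambda\to\mathbb{R}$, normed by $\|x\|_1=\sum_{\lambda\in\Lambda}|x(\lambda)|$, with canonical basis $\{e_\lambda\}_{\lambda\in\Lambda}$ (one could equally well use $\ell^1(\Lambda)$). A bounded linear functional $\phi$ on $\mathfrak{X}$ is completely determined by the bounded family $f(\lambda):=\phi(e_\lambda)$, and an immediate estimate gives $\|\phi\|=\sup_{\lambda}|f(\lambda)|$; thus $\mathfrak{X}^*$ is isometrically $\ell^\infty(\Lambda)$, and its closed unit ball is precisely $\mathfrak{X}^*_1=\{f:\Lambda\to\mathbb{R}\mid |f(\lambda)|\le 1\ \text{for all }\lambda\}=[-1,1]^\Lambda$ --- all of this carried out inside ZF.

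Next I would verify that the weak* topology $\sigma(\mathfrak{X}^*,\mathfrak{X})$ agrees, on this ball, with the product topology of $[-1,1]^\Lambda$. By definition the weak* topology is the coarsest one making every evaluation $\widehat{x}:\phi\mapsto\phi(x)$, $x\in\mathfrak{X}$, continuous; but each such $x$ is a finite linear combination of the $e_\lambda$, hence each $\widehat{x}$ is a finite linear combination of the maps $\widehat{e_\lambda}$, which are exactly the coordinate projections $f\mapsto f(\lambda)$. Therefore the weak* topology on $\mathfrak{X}^*_1$ is generated by the coordinate projections, i.e., it is the product topology. Banach-Alaoglu then says $(\mathfrak{X}^*_1,\text{weak*})$ is compact, so $[-1,1]^\Lambda$ is compact in the product topology, and transporting through the affine coordinatewise homeomorphism yields compactness of $[0,1]^\Lambda$.

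The argument is short and I do not expect a genuine obstacle; the only point deserving a little care is the topological identification just described. With $\mathfrak{X}=c_{00}(\Lambda)$ it is immediate, since every $\widehat{x}$ is literally a finite combination of coordinate projections; if instead one insists on $\ell^1(\Lambda)$, one must additionally justify, by a dominated-convergence estimate over the countable support of $x$, that $\widehat{x}$ stays product-continuous on the bounded set $\mathfrak{X}^*_1$, which is a mild extra step. Throughout, everything --- the computation of $\mathfrak{X}^*$, the description of $\mathfrak{X}^*_1$, and the topology identification --- is done in ZF, so the sole nonconstructive ingredient is the Banach-Alaoglu theorem itself.
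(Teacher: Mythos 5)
Your proof is correct, and it is carried out entirely in ZF, but it takes a genuinely different route from the paper. You realize the cube \emph{as} a dual unit ball: taking $\mathfrak{X}=c_{00}(\Lambda)$ with the $\ell^1$-norm, you compute explicitly that $\mathfrak{X}^*\cong\ell^\infty(\Lambda)$ isometrically, so that $\mathfrak{X}^*_1=[-1,1]^\Lambda$ on the nose, and you check that $\sigma(\mathfrak{X}^*,\mathfrak{X})$ is exactly the topology of pointwise convergence because every $\widehat{x}$ with $x\in c_{00}(\Lambda)$ is a finite linear combination of coordinate projections (your choice of $c_{00}$ over $\ell^1$ is exactly what makes this identification immediate, and the final rescaling $t\mapsto 2t-1$ is an explicit coordinatewise homeomorphism, so no choice is hidden anywhere). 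The paper goes the other way around: it keeps $K=[0,1]^{\Lambda}$ as the ambient object, applies Alaoglu to the span $\mathfrak{X}\subset C(K)$ of the evaluation maps with the sup norm, and shows that the natural map $i:K\to\mathfrak{X}^*_1$, $i(f)(p)=p(f)$, is a homeomorphism onto its image and that $i(K)=\bigcap_{\alpha}\{\varphi:\varphi(p_{\alpha})\in[0,1]\}$ is weak*-closed, so $K$ is compact as a closed subset of a compact space. Your version buys economy: there is no openness-of-the-embedding argument and no closedness argument, since the dual ball \emph{equals} the cube rather than merely containing a homeomorphic closed copy; the price is the (elementary but necessary) computation of the dual of $c_{00}(\Lambda)$ and the norm identity $\|\phi\|=\sup_{\lambda}|\phi(e_{\lambda})|$. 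The paper's version avoids any dual-space computation and works intrinsically with $K$, in a $C(K)$-flavoured way that matches the embedding theorem it uses next for general compact Hausdorff spaces. Both are sound; yours is arguably the shorter path to this particular statement.
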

\begin{proof}
Put $K\doteq [0,1]^{\Lambda}=\{f:\Lambda\rightarrow [0,1]\}$ (with pointwise topology). Given $\alpha\in \Lambda$, let $p_{\alpha}$ be the corresponding projection (evaluation map), \emph{i.e} $p_{\alpha}(f)=f(\alpha)$ for each $f\in K$. Let $\mathfrak{X}\subset C(K)$ be the real vector space spanned by the evaluation maps. Note that the sup norm  is well defined on $\mathfrak{X}$, since the evaluation maps are bounded and $\mathfrak{X}$ is linearly generated by $p_{\alpha}$, as $\alpha$ runs $\Lambda$. By the Alaoglu theorem $\mathfrak{X}^*_1$ is weakly*-compact. Let us consider the natural inclusion map $$i:K\rightarrow\mathfrak{X}^*_1$$ given by $i(f)(p)=p(f)$ for each $p\in\mathfrak{X}$, $f\in K$.\\
This embedding is obviously injective, moreover, thanks to the definition of $\mathfrak{X}$, it is a homeomorphism  between $K$ and $i(K)$. In fact, it is continuous, since $K\ni f\rightarrow \langle i(f),p\rangle= p(f)\in\mathbb{R}$ is a continuous function for each $p\in\mathfrak{X}$ as a linear combination of real valued continuous function (this is the point where some elementary topological properties of $\mathbb{R}$ are invoked). Finally, it is an open map onto its image. If $U\subset K$ is an open subset, we have to prove that $i(U)$ is open in $i(K)$, \emph{i.e} $i(U)=i(K)\cap V$, where $V\subset\mathfrak{X}^*_1$ is a weakly* open subset. To this aim, it is enough to take $U$ running a base of open subset, so  we can assume $U=\bigcap_{j=1}^n p_{\lambda_j}^{-1}(A_j)$, where $A_j\subset [0,1]$ is a finite family of open subset. A straightforward computation shows that $i(U)=i(K)\bigcap\{\varphi\in\mathfrak{X}^*_1: \langle\varphi,p_{\lambda_j}\rangle \in A_j\,\,\textrm{for each}\,j=1,2\dots,n\}$, so we get the conclusion, the second set in the right side  being an open subset as a finite intersection of open sets.\\
To conclude the proof, it only remains to observe that $i(K)$ is closed in $\mathfrak{X}^*_1$, since one has
$$i(K)=\bigcap_{\alpha\in \Lambda} \{\varphi\in\mathfrak{X}^*: \varphi(p_{\alpha})\in [0,1]\}$$
So $i(K)$ is compact as a closed subset of a compact space.
\end{proof}
To get the full result, the following theorem from point set topology is needed. We  quote here its proof for completeness' sake. We also want to to underline that its proof does not depend on AC.
\begin{teo}
Let $K$ be a compact Hausdorff space. Then there exist a set $\Lambda$ and a map $i: K\rightarrow [0,1]^{\Lambda}$, which is an homeomorphism between $K$ and $i(K)$.
\end{teo}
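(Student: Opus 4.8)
The plan is to run the classical Urysohn-type embedding, using as index set the collection of \emph{all} continuous $[0,1]$-valued functions on $K$. Put $\Lambda\doteq C(K,[0,1])=\{g:K\to[0,1]: g\ \text{continuous}\}$; this is a set (it is a subset of $[0,1]^{K}$) and writing it down involves no form of choice. Define the evaluation map
$$i:K\longrightarrow [0,1]^{\Lambda},\qquad i(x)(g)=g(x)\qquad(x\in K,\ g\in\Lambda),$$
again a well-defined function with no appeal to AC. I will then check that $i$ is continuous and injective, and deduce from a compactness argument that $i$ is a homeomorphism of $K$ onto $i(K)$; for a continuous bijection onto its image these properties are exactly what is needed.

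Continuity is immediate from the universal property of the product topology recalled above: for each $g\in\Lambda$ the composite $p_{g}\circ i$ is nothing but $g$, hence continuous, and a map into $[0,1]^{\Lambda}$ is continuous as soon as all of its components are. For injectivity I need continuous $[0,1]$-valued functions to separate the points of $K$, and this is where the hypotheses on $K$ enter. Since $K$ is compact and Hausdorff it is normal; the usual proof of this (cover a closed set by finitely many suitable open sets and take a finite intersection on the other side) uses only finiteness of subcovers and no choice. Given $x\neq y$ in $K$, the singletons $\{x\}$ and $\{y\}$ are disjoint closed sets, so Urysohn's lemma yields $g\in C(K,[0,1])$ with $g(x)=0$ and $g(y)=1$; hence $i(x)(g)\neq i(y)(g)$, so $i(x)\neq i(y)$.

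For the remaining point I would avoid a direct manipulation of basic open sets and instead exploit compactness. The cube $[0,1]^{\Lambda}$ is Hausdorff, because an arbitrary product of Hausdorff spaces is Hausdorff (two distinct points differ in some coordinate $g$; separate their $g$-th coordinates inside $[0,1]$ and pull the neighbourhoods back along $p_{g}$ — no choice involved), so the subspace $i(K)$ is Hausdorff as well. Now $i:K\to i(K)$ is a continuous bijection from a compact space onto a Hausdorff space, and such a map is automatically a homeomorphism: if $C\subseteq K$ is closed then $C$ is compact, $i(C)$ is compact as the continuous image of a compact set, and a compact subset of a Hausdorff space is closed; thus $i$ carries closed sets to closed sets and $i^{-1}$ is continuous. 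The three elementary facts used here — a closed subset of a compact space is compact, a continuous image of a compact space is compact, a compact subset of a Hausdorff space is closed — all follow straight from the finite-subcover definition, with no AC. In particular the same argument shows $i(K)$ is closed in $[0,1]^{\Lambda}$, which is the form in which the embedding feeds into the reduction of the Hausdorff Tychonoff theorem to the compactness of cubes.

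There is, honestly, no deep step here; what demands attention is the bookkeeping, namely verifying that \emph{every} ingredient — ``compact Hausdorff $\Rightarrow$ normal'', Urysohn's lemma, ``a product of Hausdorff spaces is Hausdorff'', and the rigidity lemma for maps from a compact space to a Hausdorff space — is carried out within plain ZF. The one place to double-check carefully is Urysohn's lemma, but its standard proof only builds a countable nested family of open sets indexed by the dyadic rationals, invoking normality at each stage, and this stays inside ZF. Finally, note that this theorem does not use the previous result that Tychonoff cubes are compact: it merely transports the compactness of $K$ to the closed subspace $i(K)$ of the cube, which is precisely the transfer one needs to close the chain of implications from the Alaoglu theorem.
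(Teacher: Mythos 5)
Your proof is correct, but it takes a genuinely different route from the paper's. You index the cube by $\Lambda=C(K,[0,1])$, the set of \emph{all} continuous $[0,1]$-valued functions, and use the evaluation embedding; Urysohn's lemma then enters only as an existence statement (to separate two points, giving injectivity), and you obtain relative openness for free from the rigidity lemma ``a continuous bijection from a compact space onto a Hausdorff space is a homeomorphism''. The paper instead takes $\Lambda$ to be the set of pairs $(\alpha,\beta)$ of basic open sets with $\overline{U_\alpha}\subset U_\beta$, attaches to each pair a specific Urysohn function $f_\lambda$, and verifies by hand, on basic open sets, that the resulting map is relatively open. Each approach buys something: the paper's argument never uses compactness of $K$ beyond normality, so it is really the general embedding of a $T_4$ space into a cube, but it must then justify that selecting one $f_\lambda$ per pair requires no choice (hence its remark that the Urysohn construction ``has a rule''); your indexing by all of $C(K,[0,1])$ sidesteps that simultaneous-selection issue entirely, at the price of using the compactness hypothesis in the final step -- harmless here, since $K$ is compact by assumption and the closedness of $i(K)$ that you get as a by-product is exactly what the final theorem of the paper needs. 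Both proofs share the same residual set-theoretic caveat, namely that Urysohn's lemma itself (whose standard proof makes a countable recursion of choices of interpolating open sets) is available without AC, a point the paper asserts and you flag but neither fully elaborates; your version is no worse off on this count, and elsewhere your ZF bookkeeping (finite choices only, universal property of the product topology, compact-in-Hausdorff is closed) is sound.
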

\begin{proof}
As a compact Hausdorff space, $K$ is normal, so Urysohn lemma applies (Urysohn lemma is independent on AC, moreover its proof is by construction). Let $\{U_{\alpha}\}$ be a base of open subset. If $\overline{U_{\alpha}}\subset U_{\beta}$, then there exists a continuous function $f:K\rightarrow [0,1]$ such that $f(x)=0$ for each $x\in U_{\alpha}$ and $f(x)=1$ for each $x\in X\backslash U_{\beta}$. Let $\Lambda$ be the set of pairs $(\alpha, \beta)$ such that $\overline{U_{\alpha}}\subset U_{\beta}$, so if $\lambda\in\Lambda$ there is a continuous function $f_{\lambda}$ (whose values are in $[0,1]$) as above. Let us define the map $i:K\rightarrow[0,1]^{\Lambda}$ by $i(x)=  \{f_{\lambda}(x)\}_{\lambda\in\Lambda}$ for each $x\in K$. Observe that the possibility of defining this map does not rely on AC, since the correspondence $\Lambda \in\lambda\rightarrow f_{\lambda}$ has a rule, namely we associate to $\lambda\in \Lambda$ the function $f_{\lambda}$ which is built as in the proof of Urysohn lemma. \\
Let $F\subset K$ be a closed subset and $x\notin F$. Then there is $U_{\alpha}$ such that $x\in U_{\alpha}$ and $U_{\alpha}\in K\backslash F$. By normality there is a $U_{\beta}$ such that  $\overline{U_{\beta}}\subset U_{\alpha}$ and $x\in U_{\beta}$; this means that there is a function $f_{\lambda}$ such that $f_{\lambda}(F)=1$ and $f_{\lambda}(x)=0$, so $i$ is an injective map. Clearly $i$ is continuous. Moreover $i$ is relatively open. To see this, let $A\subset K$ be an open set and $x\in A$, then $F=X\backslash A$ is closed, so there is a $f_{\lambda}$ such that $f_{\lambda}(x)=0$ and $f_{\lambda}(F)=0$, so
$$i(x)\in \{\omega\in [0,1]^{\Lambda}: \omega(\lambda)<\frac{1}{2}\}\cap i(K)\subset\i(A)$$
This concludes the proof, since $x\in A$ is arbitrary.
\end{proof}
Finally, we can easily prove the implication in its full generality. Before performing the proof, we want to observe that,
 given a family of closed subset $C_{\alpha}\subset X_{\alpha}$, where $\{X_{\alpha}: \alpha\in I\}$ is any collection of topological spaces, the product set $C\doteq\prod_{\alpha\in I}C_{\alpha}$ is a closed\footnote{More generally, the fact that  $\overline{\prod_{\alpha\in I} C_{\alpha}}=\prod_{\alpha\in I}\overline{C_{\alpha}}$ is not trivial, requiring the Axiom of Choice. Actually, this statement is equivalent to the AC, see \cite{Sc}.} subset of  $X\doteq \prod_{\alpha\in I} X_{\alpha}$,  simply because one has $C=\bigcap_{\alpha\in I}p_{\alpha}^{-1}(C_{\alpha})$.

Now we are able to perform our proof of the quoted implication: 
\begin {teo}
\label {eq}
Banach-Alaoglu theorem implies Tychonoff theorem for compact \emph{Hausdorff} spaces.
\end {teo}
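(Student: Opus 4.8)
The plan is to assemble the final statement from the three ingredients already in hand: that Banach--Alaoglu forces every Tychonoff cube $[0,1]^{\Lambda}$ to be compact; that every compact Hausdorff space is a homeomorphic copy of itself sitting inside some cube $[0,1]^{\Lambda}$; and that an arbitrary product of closed sets is closed in the product. So let $\{K_{\alpha}:\alpha\in I\}$ be a family of compact Hausdorff spaces. First I would, for each $\alpha$, run the construction in the embedding theorem above: choosing as base the family of \emph{all} open subsets of $K_{\alpha}$ and feeding the constructive Urysohn lemma into it, one obtains a perfectly definite index set $\Lambda_{\alpha}$ (the set of pairs of open sets $(U,V)$ with $\overline{U}\subseteq V$) together with a perfectly definite embedding $i_{\alpha}\colon K_{\alpha}\to[0,1]^{\Lambda_{\alpha}}$ that is a homeomorphism onto $i_{\alpha}(K_{\alpha})$. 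The point of spelling this out is that the assignment $\alpha\mapsto(\Lambda_{\alpha},i_{\alpha})$ is given by a rule, not by a choice function, so forming the whole family costs nothing in $\mathrm{ZF}$.

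Next I would put $\Lambda\doteq\coprod_{\alpha\in I}\Lambda_{\alpha}$ (the honest disjoint union $\bigcup_{\alpha}\Lambda_{\alpha}\times\{\alpha\}$, again a definable set) and use the canonical homeomorphism $[0,1]^{\Lambda}\cong\prod_{\alpha\in I}[0,1]^{\Lambda_{\alpha}}$: a function $\Lambda\to[0,1]$ is nothing but a family of functions $\Lambda_{\alpha}\to[0,1]$, and the two product topologies manifestly agree, all without AC. By the first theorem, $[0,1]^{\Lambda}$, hence $\prod_{\alpha}[0,1]^{\Lambda_{\alpha}}$, is compact. Now each $i_{\alpha}(K_{\alpha})$ is compact, being the continuous image of the compact space $K_{\alpha}$, and therefore closed in $[0,1]^{\Lambda_{\alpha}}$: a compact subset of a Hausdorff space is closed (a finitary argument, valid in plain $\mathrm{ZF}$), and $[0,1]^{\Lambda_{\alpha}}$ is Hausdorff as a product of Hausdorff spaces. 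By the remark preceding this theorem, $\prod_{\alpha}i_{\alpha}(K_{\alpha})$ is then closed in $\prod_{\alpha}[0,1]^{\Lambda_{\alpha}}$, hence compact as a closed subset of a compact space.

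Finally I would check that the product map $i\doteq\prod_{\alpha}i_{\alpha}\colon\prod_{\alpha}K_{\alpha}\to\prod_{\alpha}[0,1]^{\Lambda_{\alpha}}$ is a homeomorphism of $\prod_{\alpha}K_{\alpha}$ onto $\prod_{\alpha}i_{\alpha}(K_{\alpha})$: it is plainly injective and continuous, and it is open onto its image because on a basic open box $\prod_{\alpha}W_{\alpha}$ (with $W_{\alpha}=K_{\alpha}$ for all but finitely many $\alpha$) one has $i\bigl(\prod_{\alpha}W_{\alpha}\bigr)=\prod_{\alpha}i_{\alpha}(K_{\alpha})\cap\prod_{\alpha}V_{\alpha}$ for suitable open $V_{\alpha}\subseteq[0,1]^{\Lambda_{\alpha}}$ (equal to the whole cube off the finite exceptional set), exactly as in the first proof above --- a finite verification, no AC. Consequently $\prod_{\alpha}K_{\alpha}$ is homeomorphic to a compact space and so is itself compact; it is Hausdorff as a product of Hausdorff spaces, and the proof is complete.

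I do not expect a genuine obstacle here: modulo the three earlier results the argument is essentially bookkeeping. The one place that needs real care --- and the reason the earlier proofs were written the way they were --- is the scrupulous avoidance of the Axiom of Choice: one must make sure that the family of embeddings $\{i_{\alpha}\}$, the disjoint union $\Lambda$, and the identification $[0,1]^{\Lambda}\cong\prod_{\alpha}[0,1]^{\Lambda_{\alpha}}$ are all produced by explicit rules rather than by a choice function, and that the topological facts invoked along the way (continuous image of a compact set is compact, a compact subset of a Hausdorff space is closed, a product of Hausdorff spaces is Hausdorff, a closed subset of a compact space is compact) are each theorems of $\mathrm{ZF}$ alone.
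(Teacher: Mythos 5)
Your proof is correct and follows essentially the same route as the paper: embed each $K_{\alpha}$ in a cube $[0,1]^{\Lambda_{\alpha}}$, identify $\prod_{\alpha}[0,1]^{\Lambda_{\alpha}}$ with $[0,1]^{\coprod_{\alpha}\Lambda_{\alpha}}$, use Banach--Alaoglu to get compactness of that cube, and recognize $\prod_{\alpha}K_{\alpha}$ as a closed (product of closed sets) subset of it. Your extra care --- taking the full topology as the base so the family $\{(\Lambda_{\alpha},i_{\alpha})\}$ is defined by a rule without choice, and verifying explicitly that $\prod_{\alpha}i_{\alpha}$ is a homeomorphism onto its image --- only makes explicit steps the paper leaves implicit.
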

\begin {proof}
Let $\{K_{\alpha}:\alpha\in I\}$ be a family of compact Hausdorff spaces, then there is  a family of index-set $\Lambda_{\alpha}$ such that $K_{\alpha}\subset [0,1]^{\Lambda_{\alpha}}$ (as a closed subspace). Observe that $\prod_{\alpha\in I}[0,1]^{\Lambda_{\alpha}}\cong[0,1]^{\bigcup_{\alpha\in I} \Lambda_{\alpha}}$ (the union being disjoint) as topological spaces. Put $K=\prod_{\alpha\in I} K_{\alpha}\subset[0,1]^{\bigcup_{\alpha\in I} \Lambda_{\alpha}}$. $K$ is a closed subset as  product of closed sets ($K_{ \alpha}$ is closed, because it is compact and $[0,1]^{\Lambda_{\alpha}}$ is Hausdorff). Hence $K$ is compact as a closed subset of a compact space.
\end{proof}
Thanks to theorem \ref{eq}, we can also easily conclude that the Banach-Alaoglu theorem and the (apparently more general) Bourbaki-Alaoglu\footnote{Here is the statement of the general Bourbaki-Alaoglu theorem: let $E$ be a locally convex space. If $U\subset E$ is a circled convex neighborhood 0f $0\in E$, then its polar set $U^{\circ}\subset E^*$ is compact in the $\sigma(E^*,E)$-topology.} theorem are equivalent, since the proof of the last depends just on the weaker version of Tychonoff theorem.

\begin {thebibliography} {8}

\bibitem{Ala} L. Alaoglu, \emph{Weak topologies of normed linear spaces}, Annals of Mathematics \textbf{41} (2), 252-267, 1940.
\bibitem {Ban} S. Banach,\emph{Th\'{e}orie des Op\'{e}rations lin\'{e}aires}, Warsaw, Monografje Matematyczne, 1932.
\bibitem{Kell} J. L. Kelley, \emph{The Tychonoff product theorem implies the Axiom of Choice}, Fundamenta Mathematica \textbf{37}, 75-76, 1950.
\bibitem{Lux} W. A. J. Luxemburg, \emph{Reduced Powers of the Real Number System and Equivalents of the Hanh-Banach Exstensions Theorem}, Applications of Model Theory to Algebra, Analysis and Probability, Holt, Rinehart and Winston Inc, 123-137, 1969.
\bibitem {Sc} E. Schechter,\emph{Two topological equivalents of the Axiom of Choice}, Math. Logic Quarterly \textbf{38} (1), 555-557, 1992.
\end {thebibliography}
$$$$
\textsl{DIP. MAT. CASTELNUOVO, UNIV. DI ROMA LA SAPIENZA, ROME, ITALY}\\
\emph{E-mail address:} \verb"s-rossi@mat.uniroma1.it"

\end{document}